\title{Homology of loops on a splitting-rank symmetric space via the Geometric Satake for real groups}
\author{John M. O'Brien}
\newcommand{\oh}{\mathcal{O}}
\newcommand{\GL}{\textnormal{GL}}
\newcommand{\SL}{\textnormal{SL}}
\newcommand{\Sp}{\textnormal{Sp}}
\newcommand{\frakg}{\mathfrak{g}}
\newtheorem{lemma}{Lemma}
\def\Ad{\textnormal{Ad}}
\def\Mod{{\textnormal{Mod}}}
\def\gl{{\mathfrak{gl}}}
\def\pgl{{\mathfrak{pgl}}}
\def\sl{{\mathfrak{sl}}}
\def\so{{\mathfrak{so}}}
\def\PGL{{\textnormal{PGL}}}
\def\PSL{{\textnormal{PSL}}}
\def\PSO{{\textnormal{PSO}}}
\def\SO{{\textnormal{SO}}}
\def\O{{\textnormal{O}}}
\def\bbC{{\mathbb{C}}}
\def\bbF{\mathbb{F}}
\def\bbG{\mathbb{G}}
\def\bbH{{\mathbb{H}}}
\def\bbK{{\mathbb{K}}}
\def\bbO{{\mathbb{O}}}
\def\bbo{{\mathbb{O}}}
\def\bbQ{{\mathbb{Q}}}
\def\bbQ{{\mathbb{Q}}}
\def\bbR{{\mathbb{R}}}
\def\bbZ{{\mathbb{Z}}}
\def\calD{{\mathcal D}}
\def\calF{{\mathcal F}}
\def\calG{{\mathcal G}}
\def\calL{{\mathcal L}}
\def\calM{\mathcal M}
\def\calG{{\mathcal G}}
\def\calI{{\mathcal I}}
\def\frakc{{\mathfrak c}}
\def\frakf{{\mathfrak f}}
\def\frakh{{\mathfrak h}}
\def\frakm{{\mathfrak m}}
\def\frakt{{\mathfrak t}}
\def\boldQ{{\mathbf Q}}
\def\Fil{{\textnormal{Fil}}}
\def\Aut{{\mathop{\rm Aut\,}}}
\def\im{{\mathop{\rm im}}}
\def\Vect{{\mathop{\rm Vect}}}
\def\Gr{{\textnormal{Gr}}}
\def\Sp{{\textnormal{Sp}}}
\def\Spec{{\textnormal{Spec }}}
\begin{document}

\maketitle

\section*{Introduction}

Let $G$ be a complex reductive group with compact form $G_c$.  The Geometric Langlands Correspondence predicts a relationship between sheaves on the loop space $\Omega G_c$ and data attached to the dual reductive group $G^\vee$.  Central to the equivalence, then, is an understanding of the homology and equivariant homology of $\Omega G_c$ in terms compatible with the Langlands program. Ginzburg gave the first such description \cite{ginzburg2000perverse}, followed later by Bezrukavnikov, Finkelburg, and Mirković \cite{bezrukavnikov2014equivariant} and Yun and Zhu \cite{yun_integral_2011}. Here we discuss Yun and Zhu's formulation of $H_*(\Omega(G_c))$ in terms of the Langlands dual group:

\newtheorem*{t1}{Theorem 1 \cite{yun_integral_2011}}
\begin{t1}
For $G$ a connected reductive group over $\bbC$ with almost simple derived group, there is an isomorphism of Hopf algebras

$$
H_*(\Omega G_c, \bbZ[1/\ell_G]) \cong \oh(G^\vee_e)[1/\ell_G]
$$

where $\ell_G$ denotes the square of the ratio of root lengths of $G_c$, $e$ a regular nilpotent of $G^\vee$, and $G^\vee_e$ the centralizer of $e$ in $G^\vee$.
\end{t1}

They also prove a description of the equivariant cohomology in terms of the regular centralizer group scheme associated to $G^\vee$. 

The proof uses the Geometric Satake Correspondence and known homotopy equivalence $\Omega G_c \rightarrow \Gr = G(\bbC((t)))/G(\bbC[[t]])$ to canonically construct such a morphism of Hopf algebras.

The Relative Langlands program, a vast generalization of the Geometric Langlands program, studies sheaves on a wider range of spaces.  One interesting class of such spaces are the loop spaces of symmetric spaces.  Recall that a symmetric space $X_c = G_c/K_c$ is the quotient of $G_c$ by the fixed points of an involution.  Examples include spheres, projective spaces, and many other familiar spaces.

It is natural to ask whether a description of $H_*(\Omega(G_c/K_c))$ analogous to \cite{yun_integral_2011} holds.  There is a dual group--the Gaitsgory-Nadler dual group $G^\vee_X$--attached to such spaces.  It is a subgroup of $G^\vee$ with root data a modification of the relative roots of $G$.

However, describing $H_*(\Omega(G_c/K_c))$ in general is difficult even without involving dual groups--proofs involve passing to $\bbF_2$-coefficients to deal with non-orientability and seeing what information about $\bbZ$-coefficients we can glean. 

In principal, one could prove analogues of Theorem 1 by studying the algebraic loop space $\calL X = X(\bbC((t)))$.  However, sheaves on $\calL X$ are unwieldy, due to the tendency of $G(\bbC[[t]])$-orbits to be infinite dimensional and infinite codimensional. Instead, we use Quillen's result that  $\Omega(G_c/K_c)$ is homotopy equivalent to $\Gr_\bbR$, where $\Gr_\bbR$ is the affine Grassmannian $G_\bbR(\bbR((t)))/G_\bbR(\bbR[[t]])$ of the real form $G_\bbR$ of $G$ associated to $G_c/K_c$.  Here, the equivalent strata--the $G_\bbR(\bbR[[t]])$-orbits--are miraculously finite-dimensional and concrete.

In this paper we use this real analytic model to compute the homology and equivariant homology of a particularly nice family of loop spaces of symmetric spaces--the family of splitting-rank symmetric spaces.  This family is known to have no $2$-torsion in homology.  Furthermore, aside from the compact Lie groups, splitting-rank symmetric spaces dual group $G^\vee_X$ of type $A_{m-1}$.  If we choose centerless forms of the associated real groups, we get $G^\vee_X \cong \SL(m, \bbC)$.

The results of this paper will feature in a couple upcoming works. 
 With Tsao-Hsien Chen, Mark Macerato, and David Nadler we will prove a version of the Derived Satake Equivalence for $\GL_n(\bbH)$ \cite{chen2022quaternionic}.  Later, Chen and O'Brien will use this description to generalize the Derived Satake Equivalence to the remaining real splitting-rank groups. 

\newtheorem*{t2}{Theorem 2}
\begin{t2}
For $X= G_c/K_c$ a compact, splitting-rank symmetric space of adjoint type, there is an isomorphism of Hopf algebras

$$
H_*(\Omega(X), \bbZ[1/\ell_X]) \cong \oh(G^\vee_{X,e}),
$$

where $e$ is a regular nilpotent in $\frakg^\vee_{X,e}$, $G^\vee_{X,e}$ the centralizer of $e$, and $\ell_X$ the product of a few small primes.
\end{t2}

The equivariant calculation also holds, relating the equivariant homology to the regular centralizer of $G^\vee_X$.

\section*{Factorization and homology}
Here we endow $H_*(\Omega(X))$ with the structure of a Hopf algebra in an \textit{a priori} different way than loop concatenation, using factorization for the real affine Grassmannian.  Recall that, for $\Gamma$ a smooth complex curve with real form $\Gamma_\bbR$, we have the Beilinson-Drinfeld Grassmannian:

$$
\Gr_{\bbR, \Gamma_\bbR^2} = \{(x,y,\varepsilon, \tau): x,y\in \Gamma_\bbR;\; \varepsilon: E\rightarrow \Gamma_\bbR;\; \tau \textnormal{ a trivialization of E away from (x,y)})\}
$$

Here $E$ is a principal $G_\bbR$-bundle over $\Gamma_\bbR$.

This is a real analytic space over $\Gamma_\bbR^2$.  Its fibers over the diagonal embedding of $\Gamma_\bbR$ are given by $\Gr_{\bbR}$, while fibers elsewhere are isomorphic to $\Gr_{\bbR} \times \Gr_{\bbR}$. Specialization of the constant sheaf in this family gives us a product $\wedge_{sp}: H_*(\Gr_{\bbR} \times \Gr_{\bbR}) \rightarrow H_*(\Gr_{\bbR})$.  In the case of complex groups, this product is identical to the ordinary Pontryagin product by \cite{yun_integral_2011}. Dually, we denote the coproduct $\Delta_{sp}: H^*(\Gr_\bbR) \rightarrow H^*(\Gr_\bbR \times \Gr_\bbR)$.

Factorization interacts with the Geometric Satake Equivalence for real groups in the following way.  Consider the subcategory $\boldQ(\Gr_\bbR)$ of $P_{G_\bbR(\bbR[[t]])}(\Gr_\bbR)$ defined by Nadler for real groups \cite{nadler_perverse_2005}.  For splitting-rank cases, this is the entire category $P_{G_\bbR(\bbR[[t]])}(\Gr_\bbR)$ .  It is a Tannakian category with fiber functor $H^*: \boldQ(\Gr_\bbR) \rightarrow \Vect_\bbZ$ and monoidal structure given by convolution, or equivalently (up to sign), by fusion.  We use the latter: for $\calF, \calG \in \boldQ(\Gr_\bbR)$, let $\calF_{\Gamma_\bbR}$ be the "globalization" on $\Gr_{\bbR, \Gamma_{\bbR}}$.  Let $\Delta: \Gamma_\bbR \rightarrow \Gamma_\bbR^2$ denote the inclusion of the diagonal and $i: \Gamma_\bbR^2 - \Delta \rightarrow \Gamma_\bbR^2$ the inclusion of the complement of the diagonal.  Then

$$
\calF * \calG := \Delta^*i_{!*}(\calF_{\Gamma_\bbR} \boxtimes \calG_{\Gamma_\bbR})|_{\Gamma^2_\bbR-\Delta}
$$

Using factorization, we have the following lemma concerning cohomology classes:

\begin{lemma}\label{tensorend}
We have isomorphisms, for $\calF,\calG\in Q(\Gr_\bbR)$ and $h\in H^*(\Gr_\bbR, k)$,

$$
\begin{tikzcd}
H^*(\calF)\otimes H^*(\calG)\arrow[d, "\cup(\Delta_{sp} h)"]\arrow[r, "\sim" ] & H^*(\calF*\calG) \arrow[d, "\cup h"]  \\
H^*(\calF)\otimes H^*(\calG) \arrow[r, "\sim"] & H^*(\calF*\calG) 
\end{tikzcd}
$$

An identical statement holds for ${T_\bbR}$-equivariant homology, for ${T_\bbR}$ the maximal compact torus of $G_\bbR$.
\end{lemma}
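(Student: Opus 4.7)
The strategy is to lift the entire square to the Beilinson--Drinfeld Grassmannian $\Gr_{\bbR,\Gamma^2_\bbR}$ and deduce its commutativity from naturality of restriction to fibers: both horizontal arrows and both vertical arrows should arise from restricting a single pair of global objects to the diagonal fiber and to an off-diagonal fiber of the projection $\pi\colon \Gr_{\bbR,\Gamma^2_\bbR}\to \Gamma^2_\bbR$.

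The first step is to globalize the class $h$. Since the specialization coproduct $\Delta_{sp}$ is defined via the nearby-cycles behavior of the constant sheaf along the diagonal $\Delta\subset\Gamma^2_\bbR$, a class $h\in H^*(\Gr_\bbR)$ extends, by construction, to a global class $\tilde h$ on $\Gr_{\bbR,\Gamma^2_\bbR}$ whose restriction to the diagonal fiber is $h$ and whose restriction to an off-diagonal fiber agrees, under the Künneth decomposition, with $\Delta_{sp} h\in H^*(\Gr_\bbR\times\Gr_\bbR)$. This is essentially the definition of $\Delta_{sp}$.

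The second step is to identify the horizontal fusion isomorphism with parallel transport in a local system on $\Gamma^2_\bbR$. The factorization/ULA property of $\calF_{\Gamma_\bbR}\boxtimes\calG_{\Gamma_\bbR}$ forces the direct image $\pi_* i_{!*}\bigl(\calF_{\Gamma_\bbR}\boxtimes\calG_{\Gamma_\bbR}|_{\Gamma^2_\bbR-\Delta}\bigr)$ to be (shifted) locally constant. Its stalk on the diagonal is $H^*(\calF*\calG)$ by definition of fusion, and its stalk at an off-diagonal point is $H^*(\calF)\otimes H^*(\calG)$ by Künneth; the fusion isomorphism of the lemma is the resulting monodromy identification.

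The commutativity of the square then expresses naturality: the operator $\cup\tilde h$ acts globally on $\pi_*\bigl(i_{!*}(\calF_{\Gamma_\bbR}\boxtimes\calG_{\Gamma_\bbR}|_{\Gamma^2_\bbR-\Delta})\bigr)$ and hence commutes with parallel transport of the local system. Restriction to the diagonal fiber yields $\cup h$ acting on $H^*(\calF*\calG)$; restriction to an off-diagonal fiber, combined with Künneth, yields $\cup\Delta_{sp} h$ acting on $H^*(\calF)\otimes H^*(\calG)$. The main obstacle is verifying that cup product with $\tilde h$ genuinely acts on the $i_{!*}$-extension, rather than only on its restriction to $\Gamma^2_\bbR-\Delta$; this is precisely what the ULA property of $\calF_{\Gamma_\bbR}\boxtimes\calG_{\Gamma_\bbR}$ relative to $\Gamma^2_\bbR$ provides, ensuring that the cup operator preserves the IC extension inside the derived category of $\Gr_{\bbR,\Gamma^2_\bbR}$. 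The $C$-equivariant version follows by running the same argument in the $C$-equivariant derived category, since the BD family, the ULA property, and the Künneth decomposition are all naturally $C$-equivariant.
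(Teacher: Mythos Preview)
The paper does not supply a proof of this lemma at all: it is stated as an immediate consequence of the factorization setup (``Using factorization, we have the following lemma\ldots'') and then used without further argument. Your proposal is a correct and fully articulated version of exactly the argument the paper is gesturing at---globalize $h$ over the Beilinson--Drinfeld family, use ULA to identify fibers of the pushforward, and invoke naturality of cup product under specialization---so there is no divergence in approach, only in level of detail.

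One minor over-complication: the ``main obstacle'' you flag is not really an obstacle. Cup product with a global class $\tilde h\in H^*(\Gr_{\bbR,\Gamma^2_\bbR})$ automatically acts on the hypercohomology of \emph{any} complex on $\Gr_{\bbR,\Gamma^2_\bbR}$, including the $i_{!*}$-extension, simply via the module structure of $H^*(X,\calF)$ over $H^*(X)$. The ULA property is needed only for the horizontal isomorphisms (local constancy of $\pi_*$), not to make sense of the vertical cup maps. Once you have that, naturality of the $H^*$-module structure under base change to fibers gives the square immediately.
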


We will use this lemma in several places to construct morphisms into the dual group and points in the Lie algebra.

\section*{MV Filtration}
As in \cite{yun_integral_2011} we collect some results on an equivariant version of the coweight filtration of Mirkovic-Vilonen.  

Let $T_\bbR$ denote the maximal torus of the maximal compact group $K_\bbR$ and $S$ the complexification of the maximal split torus $S_\bbR$.  Let $S_{\mu, \bbR}$ be the $U_\bbR((t))$ orbit through $t^\nu$ for $\nu \in \Lambda_{S}$, where $\Lambda_S$ is the lattice of coweights of the split torus $S$.  

By Nadler's Theorem 8.5.1 \cite{nadler_perverse_2005}, for $\calF \in \boldQ(\Gr_\bbR)_{^pH}$ we have that $H^*_c(S_{\nu, \bbR}, \calF)$ is concentrated in degree $k = \langle \check{\rho},\nu\rangle$.  Since every $\nu = \theta(\mu) + \mu$ for some complex cocharacter $\mu$ and $\theta$ the involution induced by complex conjugation, we have that $k$ is of even parity.

The MV filtration filters $H^*(\calF)$ by coweights, with $\Fil_{\geq \nu} H^*(\calF) = \ker(H^*(\calF) \rightarrow H^*(S_{<\nu, \bbR}, \calF)$, where $S_{<\nu, \bbR} = \overline{S_{\nu, \bbR}} - S_{\mu,\bbR}$.  Likewise, we may filter $H^*_{T_\bbR}(\calF)$ by $\Fil_{\geq \nu}^{{T_\bbR}} H_{T_\bbR}^*(\calF) = \ker(H_{T_\bbR}^*(\calF) \rightarrow H_{T_\bbR}^*(S_{<\nu, \bbR}, \calF)$.

For $R_{T_\bbR} = H_{T_\bbR}^*(*, \bbZ)$, we have the following theorem:

\newtheorem*{th3}{Theorem 3}
\begin{th3}
There is a natural isomorphism

$$
H_{T_\bbR}^*(\Gr_\bbR, -) \cong H^*(\Gr_\bbR, -) \otimes R_{T_\bbR}: \boldQ(\Gr_\bbR)_{^pH} \rightarrow \Mod^*(R_{T_\bbR})$$

\end{th3}

\begin{proof}

The proof follows exactly as in \cite{yun_integral_2011}. Since $H^*_c(S_{\nu, \bbR}, \calF)$ is concentrated in a single degree, applying a spectral sequence gives us that $H^*_{T_\bbR,c}(S_{\nu, \bbR},\calF) \cong H^*_c(S_{\nu, \bbR}, \calF) \otimes R_{T_\bbR}$. 

Then $H^*_{T_\bbR}(\calF)$ has a spectral sequence with $E_1$ terms $H^*_{{T_\bbR},c}(S_{\nu, \bbR},\calF)$.  By the vanishing along parity, this degenerates.  

By degree consideration, since $\Fil^{T_\bbR}_{>\mu} H^*(\calF)$ is concentrated in degree above $\langle \check{\rho}, \nu \rangle$, then the following short exact sequence splits canonically:

$$
0\rightarrow \Fil^{T_\bbR}_{>\mu} H_{T_\bbR}^*(\calF) \rightarrow \Fil^{T_\bbR}_{\geq \mu} H_{T_\bbR}^*(\calF) \rightarrow H^*_{T_\bbR,c}(S_{\nu, \bbR}, \calF) \rightarrow 0
$$

Hence we have the following isomorphisms:

$$
H_{T_\bbR}^*(\calF) \cong \bigoplus_{\mu \in \Lambda_S} H^*_{{T_\bbR},c}(S_{\nu, \bbR}, \calF) \cong \bigoplus_{\mu \in \Lambda_S} H^*_{c}(S_{\nu, \bbR},\calF) \otimes R_{T_\bbR} \cong H^*(\calF) \otimes R_{T_\bbR}
$$
\end{proof}

We observe that this result can be strengthened.  Let $M_\bbR$ denote the Levi factor of the minimal parabolic $P_\bbR \subset G_\bbR$.  We have that $M_\bbR$ commutes with every $t^\nu$, hence we get a filtration on the $M_\bbR$-equivariant cohomology.

Taking invariants of the Weyl group of the Levi factor $W_M$, we get graded pieces $H^*_{M_\bbR,c}(S_{\nu, \bbR},\calF) \cong (H^*_c(S_{\nu, \bbR}, \calF) \otimes R_{T_\bbR})^{W_M}$.  By degree considerations, as above, the filtration splits. Hence,

$$
H_{M_\bbR}^*(\calF) \simeq \bigoplus_{\mu \in \Lambda_S} H^*_{{M_\bbR},c}(S_{\nu, \bbR}, \calF) \cong \bigoplus_{\mu \in \Lambda_S} (H^*_c(S_{\nu, \bbR}, \calF) \otimes R_{T_\bbR})^{W_M}
$$

Thus, we have an analogous natural isomorphism:

\newtheorem*{t4}{Theorem 4}
\begin{t4}
There is a natural isomorphism

$$
H_{M_\bbR}^*(\Gr_\bbR, -) \cong (H^*(\Gr_\bbR, -) \otimes R_{T_\bbR})^{W_M}: \boldQ(\Gr_\bbR)_{^pH} \rightarrow \Mod^*(R_{M_\bbR})$$

\end{t4}

\section*{The canonical automorphism}
In this section, we use Lemma 2 to construct a canonical automorphism of 

$$H_{T_\bbR}^*(-) \otimes_{R_{T_\bbR}} H^{T_\bbR}_*(\Gr_\bbR): \boldQ(\Gr_\bbR)_{^pH} \rightarrow \Mod^*(H^{T_\bbR}_*(\Gr_\bbR))$$

and likewise for their ${M_\bbR}$-equivariant counterparts. We use an adaptation of the approach in \cite{yun_integral_2011}.

Recall that, by the natural cell structure on $\Gr_\bbR$, $H_{T_\bbR}^*(\Gr_\bbR)$ and $H_*^{T_\bbR}(\Gr_\bbR)$ are free $R_{T_\bbR}$-modules concentrated in even degree.  For dual bases $h^i$ and $h_i$ of $H_{T_\bbR}^*(\Gr_\bbR)$ and $H_*^{T_\bbR}(\Gr_\bbR)$, we set

$$
\sigma_{T_\bbR}(v\otimes h) = \sum_i (h^i\cup h)\otimes (h_i \wedge_{sp} h)
$$

for $\wedge$ the product on homology induced by specialization.  The construction is independent of choice of basis.

Likewise, $H_{M_\bbR}^*(\Gr_\bbR)$ and $H^{M_\bbR}_*(\Gr_\bbR)$ are free $R_{M_\bbR}$-modules concentrated in even degree, as $W_M$-fixed points of the action on $H_{T_\bbR}^*(\Gr_\bbR)$ and $H_*^{T_\bbR}(\Gr_\bbR)$.  Hence we may define $\sigma_{M_\bbR}$ analogously.

We claim that $\sigma_{T_\bbR}$ induces a morphism of Tannakian group schemes

$$\sigma_{T_\bbR}: \Spec H_*^{T_\bbR}(\Gr_\bbR) \rightarrow \Aut^\otimes H_{T_\bbR}^* \cong G^\vee_X \times \Spec R_{T_\bbR}$$

where $G^\vee_X$ is the Gaitsgory-Nadler dual group of $X$.  Two properties need to be proven for this to hold.  First, we need that $\sigma_{T_\bbR}$ is a tensor automorphism so that it represents a $H_*^{T_\bbR}(\Gr_\bbR)$-point of $\Aut^\otimes H_{T_\bbR}^*$.  Then, we need that that the induced map respects multiplications in each algebraic group.  Both properties essentially follow from adjointness of multiplication and comultiplication in  $H_*^{T_\bbR}(\Gr_\bbR)$ and $H^*_{T_\bbR}(\Gr_\bbR)$.  We will give the details below.

\begin{lemma}
The automorphism $\sigma_{T_\bbR}$ of $H_{T_\bbR}^*(-) \otimes_{R_{T_\bbR}} H^{T_\bbR}_*(\Gr_\bbR)$ is a tensor automorphism.
\end{lemma}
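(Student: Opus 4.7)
The plan is to deduce the tensor compatibility of $\sigma_C$ directly from Lemma~\ref{tensorend}, which is precisely designed to trade a cup product against a cohomology class on $\Gr_\bbR$ for a cup product with its specialization coproduct.

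First I would fix dual bases $\{h^i\}$ of $H_C^*(\Gr_\bbR)$ and $\{h_i\}$ of $H_*^C(\Gr_\bbR)$ and record the structure constants of the specialization coproduct as $\Delta_{sp}(h^i) = \sum_{j,k} c^i_{jk}\, h^j \otimes h^k$. Since $\wedge_{sp}$ and $\Delta_{sp}$ are mutually adjoint---both coming from the factorization family $\Gr_{\bbR,\Gamma_\bbR^2}$---duality of bases immediately yields $h_j \wedge_{sp} h_k = \sum_i c^i_{jk}\, h_i$.

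The main step is to apply Lemma~\ref{tensorend} inside the definition of $\sigma_C(\calF * \calG)$. Under the fusion isomorphism $H_C^*(\calF * \calG) \cong H_C^*(\calF) \otimes_{R_C} H_C^*(\calG)$, the value of $\sigma_C(\calF*\calG)$ on $v \otimes w \otimes h$ reads $\sum_i (h^i \cup_{\calF*\calG}(v\otimes w))\otimes(h_i \wedge_{sp} h)$. Lemma~\ref{tensorend} transports the inner cup product to $\Delta_{sp}(h^i)\cup(v\otimes w) = \sum_{j,k} c^i_{jk}\,(h^j\cup v)\otimes(h^k\cup w)$. Swapping the order of summation and invoking $\sum_i c^i_{jk}\, h_i = h_j\wedge_{sp} h_k$ collapses the expression to $\sum_{j,k}(h^j\cup v)\otimes(h^k\cup w)\otimes(h_j\wedge_{sp} h_k\wedge_{sp} h)$, which is exactly $(\sigma_C(\calF)\otimes\sigma_C(\calG))(v\otimes w\otimes h)$ once the two $H_*^C(\Gr_\bbR)$-valued outputs are merged via $\wedge_{sp}$---the natural way to tensor two automorphisms whose coefficient ring is the same Hopf algebra.

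The only genuinely nontrivial input is Lemma~\ref{tensorend}; everything else is bookkeeping with dual bases and a single duality swap. The pitfall most worth tracking carefully is the sign/normalization convention identifying fusion with convolution, which governs the precise form of the isomorphism in Lemma~\ref{tensorend}. Invertibility of $\sigma_C$---needed to upgrade ``tensor endomorphism'' to ``tensor automorphism''---should follow from the MV filtration of the previous section: $\sigma_C$ is upper triangular with identity on the diagonal, since the unit class $h^0 = 1$ contributes the identity while higher $h^i$ strictly raise the filtration degree.
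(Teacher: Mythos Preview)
Your argument is correct and essentially identical to the paper's: both proofs compute $\sigma_C$ on a tensor product by invoking Lemma~\ref{tensorend} to replace $h^i\cup(-)$ on $\calF*\calG$ by $\Delta_{sp}(h^i)\cup(-)$, and then use adjointness of $\Delta_{sp}$ and $\wedge_{sp}$ (your structure constants $c^i_{jk}$ are exactly the paper's matrix entries $\calD_{ij,k}=\calM_{k,ij}$) to match the result with $\sigma_C\otimes\sigma_C$. Your closing remark on invertibility via the MV filtration is a useful addition that the paper leaves implicit.
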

\begin{proof}
This follows from adjointness of $\Delta_{sp}$ and $\wedge_{sp}$.

Let $v_1$ and $v_2$ be elements of $H_{T_\bbR}^*(\calF_1)$ and $H_{T_\bbR}^*(\calF_2)$.  Under the isomorphism $H_{T_\bbR}^*(\calF_1 * \calF_2) \cong H_{T_\bbR}^*(\calF_1) \otimes_{R_{T_\bbR}} H_{T_\bbR}^*(\calF_2)$, $v_1\otimes v_2$ identifies with an element of $H_{T_\bbR}^*(\calF_1 * \calF_2)$.

Then we have the following computation:

$$
\sigma_{T_\bbR}(v_1\otimes 1) \otimes \sigma_{T_\bbR}(v_2\otimes 1) = \sum_{i,j} (h^i\cup v_1)\otimes (h^j \cup v_2) \otimes (h_i \wedge_{sp} h_j)
$$

In other words, for $\calM$ the matrix of $\wedge_{sp}: H_*^{T_\bbR}(\Gr_\bbR) \otimes_{R_{T_\bbR}} H_*^{T_\bbR}(\Gr_\bbR) \rightarrow H_*^{T_\bbR}(\Gr_\bbR)$ with respect to the bases $\{h_i \otimes h_j\}$ and $\{h_k\}$, we have that 

$$
\sigma_{T_\bbR}(v_1\otimes 1) \otimes \sigma_{T_\bbR}(v_2\otimes 1) = \sum_{i,j, k} \calM_{k, ij} (h^i\otimes h^j) \cdot (v_1 \otimes v_2) \otimes h_k.
$$

where $\cdot$ represents the factor-wise cup product action.

Now, we use Lemma 1 on cupping with cohomology classes to calculate $\sigma_{T_\bbR}((v_1\otimes v_2) \otimes 1)$.

We have that

$$
\sigma_{T_\bbR}((v_1\otimes v_2) \otimes 1) = \sum_k \Delta_{sp}(h^k)(v_1\otimes v_2)\otimes h_k.
$$

For $\calD$ the matrix of $\Delta_{sp}: H^*_{T_\bbR}(\Gr_\bbR) \rightarrow H^*_{T_\bbR}(\Gr_\bbR) \rightarrow H^*_{T_\bbR}(\Gr_\bbR)$, we have that 

$$
\sigma_{T_\bbR}((v_1\otimes v_2) \otimes 1) = \sum_{i,j, k} \calD_{ij, k} (h^i\otimes h^j) \cdot (v_1 \otimes v_2) \otimes h_k.
$$

Since $\Delta_{sp}$ is the adjoint of $\wedge_{sp}$, we have that $\calD = \calM^T$.  Hence, we have our tensor structure:

$$
\sigma_{T_\bbR}((v_1\otimes v_2) \otimes 1) = \sigma_{T_\bbR}(v_1\otimes 1) \otimes \sigma_{T_\bbR}(v_2\otimes 1)
$$
\end{proof}

\begin{lemma}
The map $\sigma_{T_\bbR}: \Spec H^{T_\bbR}_*(\Gr_\bbR) \rightarrow G^\vee_X \times \Spec R_{T_\bbR}$ is a morphism of algebraic groups.
\end{lemma}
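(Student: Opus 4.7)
The plan is to translate the group morphism condition into a compatibility of Hopf algebras and verify it pointwise, using the explicit formula for $\sigma_C$ together with the duality between the cup product on $H^*_C(\Gr_\bbR)$ and the diagonal comultiplication on $H^C_*(\Gr_\bbR)$.

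First I would pin down the two group structures. Since $H^C_*(\Gr_\bbR)$ is a commutative Hopf algebra over $R_C$, with multiplication the specialization product $\wedge_{sp}$ and comultiplication $\Delta$ coming from the diagonal of $\Gr_\bbR$ (equivalently, dual to the cup product on $H^*_C(\Gr_\bbR)$), the affine scheme $\Spec H^C_*(\Gr_\bbR)$ inherits a group structure in which the group law corresponds to $\Delta$. On the target side, by the real geometric Satake equivalence and Tannakian reconstruction, the group law on $\Aut^\otimes H_C^* \cong G^\vee_X \times \Spec R_C$ is composition of tensor automorphisms of the fiber functor. What must be checked is that for $R$-points $a, b \in (\Spec H^C_*(\Gr_\bbR))(R)$, realized as $R_C$-algebra maps $H^C_*(\Gr_\bbR) \to R$ and multiplied via $\Delta$, we have
$$
\sigma_C(a \cdot b) = \sigma_C(a) \circ \sigma_C(b)
$$
as automorphisms of $H_C^*(-) \otimes_{R_C} R$.

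The verification is a short formal calculation. Substituting $h=1$ into the defining formula produces an endomorphism $\sigma_C(a)(v) = \sum_i a(h_i)\,(h^i \cup v)$ of $H_C^*(\calF) \otimes R$ for each $a$, so
$$
\sigma_C(a)\circ\sigma_C(b)(v) = \sum_{i,j} a(h_i)\,b(h_j)\,(h^i \cup h^j \cup v).
$$
On the other hand, writing $\Delta(h_k) = \sum_{i,j} c_{ij}^k\,h_i \otimes h_j$, the defining duality $\langle h^i \cup h^j,\, h_k \rangle = c_{ij}^k$ gives $h^i \cup h^j = \sum_k c_{ij}^k\, h^k$, and hence
$$
\sigma_C(a \cdot b)(v) = \sum_k (a\otimes b)(\Delta h_k)\,(h^k \cup v) = \sum_{i,j} a(h_i)\,b(h_j)\,(h^i \cup h^j \cup v),
$$
agreeing with the previous expression by associativity of cup product.

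The main point to be careful about is fixing conventions: the group law on $\Spec H^C_*(\Gr_\bbR)$ is controlled by the diagonal comultiplication on $H^C_*(\Gr_\bbR)$, not by the specialization multiplication, while the target group structure is composition in $\Aut^\otimes$; matching them amounts precisely to the duality of $\cup$ and $\Delta$. Once that is in place the calculation is routine, and the $M$-equivariant analogue follows identically, since the entire argument is $W_M$-equivariant and passes to $W_M$-invariants.
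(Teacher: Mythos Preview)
Your proof is correct and follows essentially the same approach as the paper: both arguments reduce the group-homomorphism condition to the duality between the cup product on $H^*_C(\Gr_\bbR)$ and the diagonal comultiplication $\Delta_*$ on $H^C_*(\Gr_\bbR)$, and then verify it by the same short computation expanding $h^i\cup h^j$ in the basis $\{h^k\}$. The only cosmetic difference is that you phrase the check pointwise for $R$-points $a,b$, whereas the paper writes the universal version as a commuting square of functors; the underlying identity $\sum_{i,j}(h^i\cup h^j)\cup v\otimes(h_i\otimes h_j)=\sum_k h^k\cup v\otimes\Delta_*(h_k)$ is the same.
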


\begin{proof}
This follows from adjointness of $\cup$ and $\Delta_*$, where $\Delta_*$ is the coproduct in homology.

We need to check that the algebraic group multiplication $\Spec(\Delta_*)$ of $H^{T_\bbR}_*(\Gr_\bbR)$ is mapped to the multiplication $\mu$ of $G^\vee_X$.  This corresponds to the following diagram of categories commuting

$$
\begin{tikzcd}
\boldQ(\Gr_\bbR)_{^pH} \arrow[d, "\mu*"] \arrow[r,"\sigma_{T_\bbR}"] &\Mod^*(H_*^{T_\bbR}(\Gr_\bbR)) \arrow[d, "\Delta_*"]\\
\boldQ(\Gr_\bbR \times \Gr_\bbR)_{^pH} \arrow[r, "\sigma_{T_\bbR} \otimes \sigma_{T_\bbR}"] &\Mod^*(H_*^{T_\bbR}(\Gr_\bbR)\otimes H_*^{T_\bbR}(\Gr_\bbR))
\end{tikzcd}
$$

Here, $\mu^*$ is the pullback of the multiplication map $\mu: G_\bbR(\bbR((t))) \times_{G(\bbR[[t]])} \Gr_{\bbR} \rightarrow \Gr_\bbR$.  

Now, $(\sigma_{T_\bbR}\otimes\sigma_{T_\bbR}) \circ \mu^*$ maps a sheaf $\calF$ to  $H^*_{T_\bbR}(\calF)\otimes H_*^{T_\bbR}(\Gr_\bbR) \otimes H_*^{T_\bbR}(\Gr_\bbR)$ together with automorphism

$$
v\otimes 1 \otimes 1 \longmapsto \sum_{i,j} (h^i\cup h^j) \cup v \otimes (h_i\otimes h_j).
$$

Similarly, $\Delta_*\circ \sigma_{T_\bbR}$ maps $\calF$ to its $H^*_{T_\bbR}(\calF)\otimes H_*^{T_\bbR}(\Gr_\bbR) \otimes H_*^{T_\bbR}(\Gr_\bbR)$ together with automorphism

$$
v\otimes 1 \otimes 1 \longmapsto \sum_k h^k\cup v \otimes \Delta_*(h^k).
$$

By an adjointness argument identical to that for the previous lemma, the automorphisms are equal.
\end{proof}
Since the morphism respects the MV filtration, its image lies in a Borel subgroup of $G^\vee_X \times \Spec R_{T_\bbR}$.  Hence, we write

$$
\sigma_{T_\bbR}:  \Spec H_*^{T_\bbR}(\Gr_\bbR) \rightarrow B^\vee_X \times \Spec R_{T_\bbR}
$$

Likewise, in the ${M_\bbR}$-equivariant setting, we have a morphism of Tannakian groups

$$
\sigma_M: \Spec H_*^{M_\bbR}(\Gr_\bbR) \rightarrow B^\vee_X \times \Spec R_{M_\bbR}.
$$

De-equivariantizing, we have a base morphism in ordinary homology

$$
\sigma: \Spec H_*(\Gr_\bbR) \rightarrow B^\vee_X
$$

We make an observation about the nature of this morphism, to be used in a later paper. Analogously to Remark 3.4 in \cite{yun_integral_2011}, we have the following result regarding the composition $\tau_{T_\bbR}= \pi \circ \sigma_{T_\bbR}: \Spec H_*^{T_\bbR}(\Gr_\bbR) \rightarrow T^\vee_X \times \Spec R_{T_\bbR}$ for $\pi$ the natural projection $B^\vee_X \times \Spec R_{T_\bbR} \rightarrow T^\vee_X \times \Spec R_{T_\bbR}$.

\begin{lemma}
    We have the following equality of morphisms:

 $$\tau_{T_\bbR} = \Spec(Loc_*): \Spec H_*^{T_\bbR}(\Gr_\bbR) \rightarrow T^\vee_X \times \Spec R_{T_\bbR}$$
\end{lemma}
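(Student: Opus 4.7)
The plan is to unwind both sides into the MV-filtration basis and match them stratum-by-stratum on coweights. Since $\sigma_C$ preserves the MV filtration (its image lies in $B^\vee_X \times \Spec R_C$), the projection $\tau_C = \pi \circ \sigma_C$ is determined by the action of $\sigma_C$ on the associated graded $\bigoplus_\nu H^*_{C,c}(S_{\nu,\bbR}, -)$. Unwinding the formula $\sigma_C(v\otimes 1) = \sum_i (h^i\cup v)\otimes h_i$, the projection records, for each coweight $\nu$, a universal class in $H^C_*(\Gr_\bbR)$ that acts as the identity on the $\nu$-th MV graded piece and annihilates all others.

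Next I would identify $\oh(T^\vee_X)\otimes R_C$ with the equivariant homology $H^C_*(\Gr_\bbR^C)$ of the $C$-fixed locus. In the splitting-rank setting, $\Gr_\bbR^C$ is the discrete set $\{t^\nu\}$ indexed by the relevant cocharacter lattice, which matches the character lattice of the Gaitsgory-Nadler torus $T^\vee_X$. Under this identification, the map $Loc_*\colon \oh(T^\vee_X)\otimes R_C \to H^C_*(\Gr_\bbR)$ becomes the pushforward along the inclusion $\Gr_\bbR^C \hookrightarrow \Gr_\bbR$, which is the usual notion of localization in equivariant homology.

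The final step is to compare the two resulting ring maps $\oh(T^\vee_X)\otimes R_C \to H^C_*(\Gr_\bbR)$ on the character basis. The adjointness between $\wedge_{sp}$ and $\Delta_{sp}$ already exploited in the proof of Lemma 3, combined with the canonical MV splitting of Theorem 3, identifies the image of $e^\nu$ under the ring map dual to $\tau_C$ with the pushforward of the fundamental class of $t^\nu$: both are characterized by acting, via $\wedge_{sp}$ against cohomology, as the projector onto the $\nu$-th MV graded piece $H^*_{C,c}(S_{\nu,\bbR}, -)$.

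The main obstacle I expect is verifying this last identification in a way compatible with the factorization product, since the $\wedge_{sp}$ pairing with a fixed-point class must be computed geometrically on the Beilinson-Drinfeld Grassmannian rather than abstractly. This amounts to a real-analytic version of the hyperbolic-localization result of Mirković-Vilonen; in the splitting-rank case it should follow from the fact that each stratum $S_{\nu,\bbR}$ is finite-dimensional with unique $C$-fixed point $t^\nu$, so that after inverting the primes dividing $\ell_X$ the pushforward of $[t^\nu]$ genuinely coincides with the MV projector, but the orientation and sign bookkeeping over $\bbZ[1/\ell_X]$ (as opposed to $\bbF_2$) will require the parity vanishing of Theorem 3 and a careful comparison of the specialization maps on the two sides of the diagonal $\Delta \subset \Gamma_\bbR^2$.
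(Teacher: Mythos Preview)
Your high-level outline matches the paper's approach: pass to the MV associated graded, compute $\sigma_C$ on each weight piece, and identify the answer with the fixed-point pushforward. But you have misidentified the relevant adjointness and invented obstacles that are not there.

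The paper's argument is a two-line computation. For a weight-$\lambda$ vector $v_\lambda \in H^*_{C,c}(S_{\lambda,\bbR},\calF)$, the cup action of any $h^j\in H^*_C(\Gr_\bbR)$ factors through the restriction $i_\lambda^*(h^j)\in H^*_C(\{t^\lambda\})=R_C$, simply because $S_{\lambda,\bbR}$ retracts $C$-equivariantly onto the single point $t^\lambda$. Hence
\[
\sigma_C(v_\lambda)\;=\;\sum_j \bigl(i_\lambda^*(h^j)\cup v_\lambda\bigr)\otimes h_j
\;=\; v_\lambda\otimes\Bigl(\sum_j i_\lambda^*(h^j)\, h_j\Bigr)
\;=\; v_\lambda\otimes i_{\lambda,*}(1),
\]
the last equality being nothing more than the duality between $i_\lambda^*$ and $i_{\lambda,*}$ with respect to the dual bases $\{h^j\}$, $\{h_j\}$. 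Summing over $\lambda$, the induced map on the torus quotient is exactly $Loc_*=\sum_\lambda i_{\lambda,*}$.

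The adjointness you invoke, between $\wedge_{sp}$ and $\Delta_{sp}$, plays no role here; the only adjointness needed is restriction/pushforward for the inclusion of a point. Likewise, there is no need for a real-analytic hyperbolic localization statement, no Beilinson--Drinfeld geometry, no orientation or sign bookkeeping, and no appeal to parity vanishing beyond what already went into defining the MV filtration. Your ``main obstacle'' paragraph can be deleted entirely: once you observe that the cup action on the $\lambda$-weight piece is scalar multiplication by $i_\lambda^*(h)$, the proof is finished.
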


\begin{proof}
    We follow the proof closely.  For each weight vector $v_\lambda \in H_{{T_\bbR},c}(S_{\lambda,\bbR, \calF)}$, we have that, for $i_\lambda$ the inclusion of $\{t_\lambda\}$ into $\Gr_\bbR$

    $$
    \sigma_{T_\bbR}(v_\lambda) = \sum_j i_\lambda^*(h^j) \cup v_\lambda \otimes h_j = v_\lambda \cup i_{\lambda,*}(1)
    $$

    Since equivariant localization $Loc^*$ is the product of all $i_\lambda^*$, its dual $Loc_*$ is the sum of all $i_{\lambda,*}$, the same effect of $\sigma_{T_\bbR}$.

    Hence  $\tau_{T_\bbR} = \Spec(Loc_*)$.
\end{proof}

\section*{Primitive classes}
We seek to show that image of $\sigma$ centralizes a principal nilpotent of $\frakg^\vee_X$.  For now, we consider the adjoint forms of $G_\bbR: \PSL(n,\bbH), \PSO(1,2n-1),$ and $PE_6(F_4)$ along with their corresponding symmetric spaces.

In every case, our relative dual group $G^\vee_X = \SL(n,\bbC)$.  We fix some notation.  Once and for all, identify $\Phi^\vee_X = \{e^i-e^j\}$ with simple roots $\alpha^\vee_i = e_i-e_{i+1}$.  

Let $\omega_i = (\alpha^\vee_i)^*$ denote the fundamental coweight dual to $\alpha^\vee_i$.  Likewise, let $\omega_{1,i}$ denote the $n$ coweights in the Weyl group orbit of $\omega_{1,i}$.  We have that $\omega_{1,1} = \omega_1$, $\omega_{1,i} = -\omega_1+\omega_i$ for $0<i<n$, and $-\omega_n$ for $i=n$.  Note that $\omega_{1,i}$ with $i<n$ give a basis for $\Lambda_S$, the lattice of coweights of $S$.  In terms of the standard basis $e^i$ of $\bbZ^n$, $\omega_{1,i} = \frac{1}{n}( (n-1)e_i - \sum_{j\neq i} e_j)$, identified with the appropriate diagonal matrices.


Let $\alpha^\vee_{1,i}$ with $i<n$ denote the dual basis of $\Lambda_S$ to $\omega_{1,i}$--i.e. $\alpha^\vee_{1,i} = \sum_{j\geq i} \alpha^\vee_j$.  In terms of our standard basis $e^i$, we have that $\alpha^\vee_{1,i} = e^i-e^n$

Consider the minuscule variety $\Gr_{\omega_1, \bbR}\cong G_\bbR/P_{\omega_1,\bbR}$, where $P_{\omega_1,\bbR}$ is the parabolic of $G_\bbR$ generated by the root subgroups $U_{\alpha^\vee}$ with $\langle \omega_1, \alpha^\vee\rangle\leq 0$.  When $G_\bbR = \PGL(n,\bbH), \PSO(1,2n-1),$ or $PE_6(F_4)$, we have that $\Gr_{\omega^1,\bbR} = \bbH P^{n-1}, S^{2n-2},$ or $\bbo P^2$ respectively. In all cases, $H^*(\Gr_{\omega^1,\bbR}) \cong \bbZ[e]/(e^n)$, where $e$ is of degree $d=4, 2n-2,$ or $8$ respectively.  

In the projective space cases, we may realize $e$ as the Euler class of the dual of the tautological bundle.  For the quaternionic case, recall that $\bbH P^{n-1}$ may be identified with the set of quaternionic lines in $\bbH^n$ --hence, it has a tautological bundle.  Likewise $\bbO P^1$ may be identified with the set of octonionic lines in $\bbO \times \bbO$--for details, see the discussion in \cite{baez01}. However, $\bbO P^2$ does not have this realization.

Hence, we use an alternative description for the octonionic tautological line bundle. Let $n=2,3$.  Realize $\bbO P^{n-1}$ and as the set of rank-one projections in the corresponding Jordan algebra $\mathfrak{h}_n(\bbO)$ of matrices Hermitian in $\bbO$ over $\bbR$.  Taking the image of each projection $p$ on $\bbO^n = \bbR^{8n}$ gives an 8-dimensional subspace of $\bbR^{8n}$. Let $\calL_{taut,\bbO p^{n-1}} = \{(p,v): p\in \bbO P^{n-1}, v\in \im p\}.$ denote the disjoint (as a set) union of these subspaces.  Under the map $\calL_{taut,\bbO P^{n-1}} \rightarrow \O P^{n-1}$ sending $(p,v) \mapsto v$, this is an 8-dimensional real vector bundle over $\bbO P^{n-1}$.  Observe that, for $\bbO P^{1}$ this corresponds naturally with the set of octonionic lines in $\bbO \times \bbO$.

For the even spheres that are not of the form $\bbK P^{n-1}$, there is no tautological line bundle. Instead, we use $-\frac{1}{2}e_1(TS^{2n-2})$.

Since $H_*(\Gr_{\omega^1, \bbR})$ generates the homology of $\Gr_\bbR$ by an argument of Bott \cite{Bott_1958}, we may extend the class $e\in H^*(\Gr_{\omega^1,\bbR})$ to a class $e \in H^*(\Gr_\bbR)$. We call this class the determinant class by analogy with the determinant line bundle over the complex affine Grassmannian.  Indeed, for $G_\bbR$ defined over $\bbH$ we may repeat the construction in \cite{yun_integral_2011} to get an honest quaternionic determinant line bundle over $\Gr_\bbR$.

Using \ref{tensorend} and the fact that $e$ is primitive, we get that taking the cup product with $e$ produces an endomorphism of $H^*: Q(\Gr_\bbR)_{^pH} \rightarrow \Vect_k$ with tensor formula $e \otimes 1 + 1\otimes e$.  In diagrams,

$$
\begin{tikzcd}
H^*(\calF)\otimes H^*(\calG)\arrow[d, "\cup(e \otimes 1 + 1 \otimes e)"]\arrow[r, "\sim" ] & H^*(\calF*\calG) \arrow[d, "\cup e"]  \\
H^*(\calF)\otimes H^*(\calG) \arrow[r, "\sim"] & H^*(\calF*\calG) 
\end{tikzcd}
$$

According to the Tannakian dictionary, such an endomorphism corresponds to an element of $e_X\in \frakg_X^\vee$.  According to the basis $\{1,\alpha, \alpha^2,\dots, \alpha^{n-1}\}$ of $H^*(\Gr_\bbR)$, cupping with $\alpha$ corresponds to the standard principal nilpotent in $\frakg_X^\vee$.

Now, we give the class $M_\bbR$-equivariant structure, giving rise to an element $e^M$ restricting to $e$.  As before, $e^M$ will be primitive in $H^*_{M_\bbR}(\Gr_\bbR,\bbQ)$.  Hence cupping with $e^M$ will correspond to an element of $\frakg_X^\vee\otimes R_{M_\bbR}$ with constant part $e$.  By degree considerations, the nonconstant part $p^M = e^M-e$ lies in $\frakt_X^\vee \otimes H^d_{{M_\bbR}_\bbR}(\Gr_\bbR)$.  Hence $p^M$ is of form $\sum_i g_i\otimes h_i$, where $g_i$ are elements of $\frakt_X^\vee$, identified with coweights $\nu_i \in \Lambda_S$.  Furthermore, $h_i$ is a polynomial function $\frakt \rightarrow \bbC$ invariant under $W_\bbR$. The action of $p^M$ on $H_{M_\bbR}^*(S_{\nu, \bbR})$ is given by $t^\nu \mapsto \sum_i \langle g_i, \nu \rangle h_i$.  

Recall that we may identify the fixed points of the $M_\bbR$-action on $\Gr_{\omega_1, \bbR}$ with $t^{\omega_{1,i}}$ for $i<n$.  Restricting to the fiber of our line bundle over $t^{\omega_{1,i}}$, we get that $\frakc \cap \frakm_\bbR$ acts by multiplication by a polynomial $f_i: \frakc \rightarrow \bbC$ invariant under the Weyl group of $M_\bbR$.   Using our dual basis $\alpha_{1,i}$ to $\omega_{1,i}$, we may calculate that $p^M = \sum_{i<n} \alpha^\vee_{1,i}\otimes f_i$.

Using this formula, we calculate $p^M$--and hence $e^M$--case-by-case by calculating $f_i$:

\subsection*{Orthogonal case}
For $G_\bbR = \PSO(1,2n-1)$, we use $e^M = \frac{1}{2}e^M_1(TS^{2n-2})$ with ${M_\bbR} = \SO(2n-2)$.  We may identify the restriction of $TS^{2n-2}$ to $\{t^{\omega_1}\}$ with the vector space $\so(2n-1)/\so(2n-2)$.  The left action of $\SO(2n-2)$ may be identified with the vector representation.  Hence the nonconstant portion of the Euler class is given by 

$$
p^M = \frac{1}{2}\alpha^\vee_1\otimes t_1t_2\dots t_n
$$

where $t_i$ are the coordinates of the copy of $\SO(2n-2)$ in the lower right corner of $\SO^+(1,2n-1)$.

In terms of matrices, then, we have that

$$
e^M
\begin{bmatrix}
    0 & t_1 & \\
    -t_1 & 0 & \\
     & & 0 & t_2 \\
     & & -t_2 & 0 \\
     & & & & \ddots \\
     & & & & & 0 & t_n \\
     & & & & & -t_n & 0
\end{bmatrix}
=
\begin{bmatrix}
    \frac{1}{2}t_1t_2\dots t_n & 1\\
    0 & -\frac{1}{2}t_1t_2\dots t_n 
\end{bmatrix}
$$

\subsection*{Octonionic cases}
For $G_\bbR = PE_6(F_4)$ or $\PSO(1,9)$, we may use the equivariant structure of the tautological line bundle on $\bbo P^n$ to calculate our class.  We treat the case of $\PSO(1,9)$ first:  here, ${M_\bbR} = \SO(8)$ and $\Gr_{\mu,\bbR} = \bbo P^1$.  The center of the double cover $\textnormal{Spin}(9)$ of the maximal compact subgroup $\SO(9)$ acts nontrivially on fibers.  Hence we calculate the equivariant class by working over $H^*_{\textnormal{Spin}(9)}(pt)$, then  pushing back to $H^*_{\SO(9)}(pt)$, picking up a coefficient of $\frac{1}{2}$.. Restricting to the north pole $\{t^{\omega_1}\}$, the bundle becomes the vector representation of $\mathfrak{m}$. Hence, for $t_i,$ $i=1$ to $4$ the coordinates of the Cartan of $\so(8)$, we get that

$$
p^M = \frac{1}{2}\alpha^\vee_1 \otimes t_1t_2t_3t_4
$$

In terms of matrices, then,

$$
e^M
\begin{bmatrix}
    0 & t_1 & \\
    -t_1 & 0 & \\
     & & 0 & t_2 \\
     & & -t_2 & 0 \\
     & & & & 0 & t_3 \\
     & & & &  -t_3 & 0 \\
     & & & & & & 0 & t_4 \\
     & & & & & & -t_4 & 0
\end{bmatrix}
= 
\begin{bmatrix}
    \frac{1}{2}t_1t_2t_3t_4 & 1\\
    0 & -\frac{1}{2}t_1t_2t_3t_4 
\end{bmatrix}
$$

Notice that this is the same formula as we calculated using the adjoint in the prior section.

For $G_\bbR = PE_6(F_4)$, we observe that ${M_\bbR} = \textnormal{Spin}(8)$.  Since the compact group $F_4$ is both simply-connected and centerless, we do not pick up any fractional coefficients.  Restrict first to $\{t^{\omega_{1,1}}\}$.  To calculate the class associated to the $M$-action on this fiber, consider first the orbit of the stabilizer $P_{\omega_{1,3}}$ of $\{t^{\omega_{1,3}}\}$ through  $\{t^{\omega_{1,1}}\}$.  This orbit is naturally identified with a copy of $\bbO P^1 \subset \bbO P^2$, corresponding to the upper left copy of $\frakh_2(\bbO) \subset \frakh_3(\bbO)$.  Moreover, ${M_\bbR} \subset P_{\omega_{1,3}}$, so we may use the calculation for $\PSO(1,9)$ to get the polynomial corresponding to the restriction to $\{t^{\omega_{1,1}}\}$--it is $p_0(x) = t_1t_2t_3t_4$.  

To get the restrictions to the other two fixed points, we may repeat this construction--however, we must be careful calculating which weights of the Cartan of $\frakm$ show up.  Instead we calculate by using automorphisms of $\bbO P^2$ that, in a sense, "permute the coordinates."  Recall that $\so(8)$ has a canonical outer triality permuting the coweights $\omega_{vect} = (1,0,0,0)$, $\omega_{spin-} = (\frac{1}{2}, \frac{1}{2}, \frac{1}{2}, -\frac{1}{2})$ and $\omega_{spin+} = (\frac{1}{2}, \frac{1}{2}, \frac{1}{2}, \frac{1}{2})$ and, dually, their associated representations of the dual group $V^{vect}$, $V^{spin-}$, and $V^{spin+}$.  Moreover, as an $\so(8)$-representation, $\frakf_4 \cong \so(8)\oplus V^{vect}\oplus V^{spin-} \oplus V^{spin+}$, extending the triality to an automorphism of $\frakf_4$ \cite{baez01}. The Weyl group of $\frakf_4$ induces the triality, in fact giving the decomposition $W(F_4) = W(\SO_8) \rtimes S_3$, where $S_3$ is the symmetric group on three letters, which we may identify with $\omega_{1,i}$.  

Let $\tau_{12}$ denote the a lift of the transposition swapping $\omega_{1,1}$ and $\omega_{1,2}$, fixing $\omega_{1,3}$.  We have that $\tau_{12}$ induces one of the triality automorphisms of $\so(8)$--without loss of generality, assume it swaps $\omega_{vect}$ and $\omega_{spin-}$, fixing $\omega_{spin+}$.  With respect to the basis $e_i$ of the weights of $\so(8)$, we have that $\tau_{12}$ has the following matrix $A_{12}$:

$$
A_{12} = \left(\begin{matrix}
\frac{1}{2} & \frac{1}{2} & \frac{1}{2} & \frac{-1}{2} \\
\frac{1}{2} & \frac{1}{2} & \frac{-1}{2} & \frac{1}{2} \\
\frac{1}{2} & \frac{-1}{2} & \frac{1}{2} & \frac{1}{2} \\
\frac{-1}{2} & \frac{1}{2} & \frac{1}{2} & \frac{1}{2}
\end{matrix}\right)
$$

Hence $\tau_{12}$ maps $t_1$ to $\frac{1}{2}t_1+\frac{1}{2}t_2 + \frac{1}{2}t_3 - \frac{1}{2}t_4$, and likewise for the other $t_i$.  Therefore we get that our polynomial for $t^{\omega_{1,2}}$ is given, up to sign, by 

$$
p_-(x) = (\frac{t_1}{2}+\frac{t_2}{2} + \frac{t_3}{2} - \frac{t_4}{2})(\frac{t_1}{2}+\frac{t_2}{2} - \frac{t_3}{2} + \frac{t_4}{2})(\frac{t_1}{2}- \frac{t_2}{2} + \frac{t_3}{2} + \frac{t_4}{2})(-\frac{t_1}{2}+\frac{t_2}{2} + \frac{t_3}{2} + \frac{t_4}{2}).
$$

By a similar argument featuring $\tau_{13}$, we can get that the polynomial at $t^{\omega_{1,3}}$ is given, up to sign, by

$$
p_+(x) = (\frac{t_1}{2}+\frac{t_2}{2} + \frac{t_3}{2} + \frac{t_4}{2})(\frac{t_1}{2}+\frac{t_2}{2} - \frac{t_3}{2} - \frac{t_4}{2})(\frac{t_1}{2}- \frac{t_2}{2} + \frac{t_3}{2} - \frac{t_4}{2})(+\frac{t_1}{2}-\frac{t_2}{2} -\frac{t_3}{2} + \frac{t_4}{2}).
$$

Notice that $p_0(x) = p_+(x) + p_-(x)$.

Using the first two restrictions and the above identity, we get that 

$$
p^M = \alpha^\vee_{1,1}\otimes p_0 - \alpha^\vee_{1.2}\otimes p_-
$$

Hence we have that

$$
e^M
\begin{bmatrix}
    0 & t_1 & \\
    -t_1 & 0 & \\
     & & 0 & t_2 \\
     & & -t_2 & 0 \\
     & & & & 0 & t_3 \\
     & & & &  -t_3 & 0 \\
     & & & & & & 0 & t_4 \\
     & & & & & & -t_4 & 0
\end{bmatrix}
= 
\begin{bmatrix}
    p_0 & 1\\
    & -p_- & 1 \\
    & & -p_+
\end{bmatrix}
$$

\subsection*{Quaternionic Cases}
We use a somewhat different argument for the quaternionic cases. We begin by calculating the case where $G_\bbR = \GL_n(\bbH)$. For this case, consider the coweight $e^1$ with $e^1(t) = \textnormal{diag}(t,1,\dots,1) \in (\bbH^*)^n$ for $t\in \bbC^*$.  We have that $\Gr_{e^1} \cong \bbH P^{n-1}$, with ${M_\bbR} = (\bbH^*)^n$-fixed points corresponding to $e^i$ mapping $\bbC^*$ into the $i$th diagonal coordinate.  We have that ${M_\bbR}$ acts on the fiber of $\calL_{det, \bbH P^{n-1}}$ over $\{t^{e^i}\}$ by the vector representation with weights $\textnormal{diag}(t_1,t_2,\dots, t_m) \mapsto \pm t_i$. Hence the restriction of the equivariant cohomology class to $\{t^{e^i}\}$is $t_i^2$.  Consider the basis $e_i$ of $\Lambda_S$ dual to $e^i$.  With respect to this basis, then, we have that

$$
p^M = \sum_i e_i \otimes t_i^2
$$

In terms of matrices, we may think of $e^M$ as the map $\frakc \rightarrow \gl_n(\bbC)$ given by

$$
e^M
\begin{bmatrix}
    t_1 \\
    & t_2 \\
    & & \ddots \\
    & & & t_n
\end{bmatrix}
=
\begin{bmatrix}
    t_1^2 & 1\\
    & t_2^2 & 1\\
    & & \ddots & \ddots \\
    & & & t_{n-1}^2 & 1 \\
    & & & & t_n^2
\end{bmatrix}
$$

Complete the list $\alpha^\vee_i$ to a basis by appending the trace $\chi^\vee = \sum e_i$.  With respect to this basis, we have the following formula for $p^M$.  Let $\varphi: (\bbC)^n \rightarrow (\bbC^n)$ denote squaring, so that $\omega_1 \circ sq = \frac{1}{n}((n-1)t_1^2-t_2^2 - t_3^2 - \dots - t_n^2)$ and so on.  Then  

$$
p^M = \frac{1}{n}\chi^\vee \otimes (\chi^\vee\circ \varphi) + \sum_i \alpha^\vee_i \otimes (\omega_i\circ \varphi)
$$

Similarly, with respect to the basis $\alpha^\vee_{1,i}$ appended with the trace $\chi^\vee$:

$$
p^M = \frac{1}{n}\chi^\vee \otimes (\chi^\vee\circ \varphi) + \sum_i \alpha^\vee_{1,i} \otimes (\omega_{1,i}\circ \varphi)
$$

$$
e^M
\begin{bmatrix}
    t_1 \\
    & t_2 \\
    & & \ddots \\
    & & & t_n
\end{bmatrix}
=
\begin{bmatrix}
    \omega_{1,1}\circ \varphi & 1\\
    & \omega_{1,2}\circ \varphi & 1\\
    & & \ddots & \ddots \\
    & & & \omega_{1,n-1}\circ \varphi & 1 \\
    & & & & \omega_{1,n}\circ \varphi
\end{bmatrix}
+
\frac{1}{n}\chi I
$$

For $G_\bbR = \SL_n(\bbH)$, we use the restriction of the above class.  Observe that the term $\frac{1}{n}\chi^\vee \otimes (\chi^\vee\circ \varphi)$ vanishes at every $t^\nu$ where $\nu$ is a coweight of $\SL_n(\bbH)$.  Hence, for $\SL_n(\bbH)$, we have that $e^M$ lands in $\pgl_n(\bbC)$:

$$
e^M
\begin{bmatrix}
    t_1 \\
    & t_2 \\
    & & \ddots \\
    & & & t_n
\end{bmatrix}
=
\begin{bmatrix}
    \omega_{1,1}\circ \varphi & 1\\
    & \omega_{1,2}\circ \varphi & 1\\
    & & \ddots & \ddots \\
    & & & \omega_{1,n-1}\circ \varphi & 1 \\
    & & & & \omega_{1,n}\circ \varphi
\end{bmatrix}
$$

For $G_\bbR = \PSL_n(\bbH)$, observe that the map $H*_{\PSL_n(\bbH)}(pt) \cong H^*_{\SL_n(\bbH)}(pt)$ is determinant 2, hence the equivariant portion picks up a coefficient of $\frac{1}{2}$:

$$
e^M
\begin{bmatrix}
    t_1 \\
    & t_2 \\
    & & \ddots \\
    & & & t_n
\end{bmatrix}
=
\begin{bmatrix}
    \frac{1}{2}\omega_{1,1}\circ \varphi & 1\\
    & \frac{1}{2}\omega_{1,2}\circ \varphi & 1\\
    & & \ddots & \ddots \\
    & & & \frac{1}{2}\omega_{1,n-1}\circ \varphi & 1 \\
    & & & & \frac{1}{2}\omega_{1,n}\circ \varphi
\end{bmatrix}
$$

\section*{Proof of isomorphism}

We now calculate the image of $\sigma_{M_\bbR}$ and $\sigma$ and show that they are injective.  

Let $B^\vee_{X, e^M}$ denote the centralizer of $e^M$ in $B^\vee_X \times \Spec R_M$.  Since $\sigma_{M_\bbR}$ commutes with $p_1^M$, the image of $\sigma_{M_\bbR}$, considered as a morphism, lands in $B^\vee_{X, e^M}$

Thus, our morphism becomes

$$
\sigma_{M_\bbR}: \Spec H_*^{M_\bbR}(\Gr_\bbR) \rightarrow B^\vee_{X, e^M}
$$

For the torus ${T_\bbR}$, let $B^\vee_{X, e^T}$ be the centralizer of $e^T = e^M$ in $B^\vee_{X} \times \Spec R_{T_\bbR}$.  In this setting, we have

$$
\sigma_{T_\bbR}: \Spec H_*^{T_\bbR}(\Gr_\bbR) \rightarrow B^\vee_{X, e^{T_\bbR}}.
$$

Passing to the nonequivariant setting, we get

$$
\sigma: \Spec H_*(\Gr_\bbR) \rightarrow B^\vee_{X, e}
$$

where $B^\vee_{X, e}$ is the centralizer of $e$ in $B^\vee_{X}$.

We show that these are, in fact, isomorphisms after inverting a few small primes.  For the non-group cases, it suffices to invert at most the prime $2$.

First, we make a series of observations about the image, mirroring Yun and Zhu's Proposition 3.3 and its proof \cite{yun_integral_2011}.  We will ultimately need it to generalize our results to cases other than centerless groups:

\begin{lemma}
(1) There is an isomorphism $\Spec H_0(\Gr_\bbR) \cong Z^\vee_X$, where $Z^\vee_X$ is the center of $G^\vee_X$.

(2)If $G_\bbR$ is of simply connected type--that is, if $X$ is simply connected--then $\sigma$ factors through $\sigma: \Spec H_*(\Gr_\bbR) \rightarrow U^\vee_X$, the unipotent radical of $B^\vee_X$.

(3) Let $\Gr^0_\bbR$ denote the neutral component of $\Gr^0_\bbR$.  WIn general, $\sigma$ factors as
    
    $$\sigma: \Spec H_*(\Gr_\bbR) \xrightarrow{\sim} \Spec H_0(\Gr_\bbR) \times \Spec \Spec H_*(\Gr^0_{\bbR}) \rightarrow Z^\vee_X\times U^\vee_X$$
\end{lemma}

\begin{proof}
We may identify $\pi_0(\Gr_\bbR) \cong \pi_1(X)$ with $\Lambda_S/\bbZ \Phi_X$. The proof is similar to that for the complex affine Grassmannian, for details, see \cite{loos}.  Hence $H_0(\Gr_\bbR) \cong \Lambda_S/\bbZ \Phi_X \cong \oh(Z^\vee_X)$ as Hopf algebras.

As an automorphism, $\sigma$ descends to an automorphism of the associated graded functor

$$\bigoplus_{\nu\in \Lambda_S} H^*_c(S_{\nu,\bbR},-) \otimes H_*(\Gr_\bbR) \rightarrow \textnormal{Mod}^{\Lambda_S}(H_*(\Gr_\bbR))$$.  

Hence $\sigma$ induces a map $\Spec H_*(\Gr_\bbR) \rightarrow T_X^\vee$ equal to the composition $\Spec H_*(\Gr_\bbR) \rightarrow B_X^\vee \rightarrow T_X^\vee$.

But the graded pieces $$H^*_{T_\bbR}(S_{\nu,\bbR},\calF)$$ are concentrated in one degree.  Hence the automorphism induced by $\sigma$ may be calculated by taking the class corresponding to $\nu \in H_0(\Gr_\bbR)$, expanding it to a basis of $H_*(\Gr_\bbR)$, then taking a dual basis of $H^*(\Gr_\bbR)$ starting with $\nu^*$, which takes the value $1$ on the component of $\nu$ and $0$ elsewhere. Then it is clear that $\sigma$ induces the following automorphism of $H^*_{T_\bbR}(S_{\nu,\bbR},-) \otimes H_*(\Gr_\bbR) \rightarrow \textnormal{Mod}^{\Lambda_S}(H_*(\Gr_\bbR)$:

$$
v \mapsto (\nu^*\cup v)\otimes \nu
$$

Since every $\nu\in H_0(\Gr_\bbR)$, our graded functor factorizes as 

$$\oplus_{\nu\in \Lambda_S} H^*_{T_\bbR}(S_{\nu,\bbR},-) \otimes H_*(\Gr_\bbR) \rightarrow \textnormal{Mod}^{\Lambda_S}(H_0(\Gr_\bbR))\rightarrow \textnormal{Mod}^{\Lambda_S}(H_*(\Gr_\bbR))$$

where the last arrow is extension of scalars to $H_*(\Gr_\bbR)$.  Hence our map $\Spec H_*(\Gr_\bbR) \rightarrow B_X^\vee \rightarrow T_X^\vee$ has image in $Z^\vee_X$.  Taking the preimage, we get that $\Spec H_*(\Gr_\bbR) \rightarrow B_X^\vee$ has image in $Z_X^\vee \times U_X^\vee$.

If $G_\bbR$ has simply connected root data--i.e. if $X$ is simply connected--then clearly $Z^\vee_X = \{e\}$ and thus $\Spec H_*(\Gr_\bbR) \rightarrow B_X^\vee$ has image in $U_X^\vee$

For the last portion, observe that the neutral component $\Gr^0_\bbR$ corresponds to the affine Grassmannian $\Gr^{sc}_\bbR$ of $(G^{sc})_\bbR$.  By functoriality, we have that the map $\Spec H_*(\Gr_\bbR) \rightarrow \Spec H_*(\Gr^{sc}_\bbR)$ is just the projection $Z_X^\vee \times U_X^\vee \rightarrow U_X^\vee$--in other words, we have the last assertion.
\end{proof}

\newtheorem*{t7}{Theorem 7}
\begin{t7}
The map $\sigma$, resp. $\sigma_{T_\bbR}$ and $\sigma_{M_\bbR}$, are closed embeddings.
\end{t7}

\begin{proof}
We prove the non-equivariant case.  The ${T_\bbR}$ and ${M_\bbR}$-equivariant cases follow by lifting basis elements.

For now, assume $G_\bbR$ is centerless.  We wish to show that the image of every $H_*(\Gr_{\nu, \bbR})$ inside $H_*(\Gr_\bbR)$ lies in the image of $\oh(B^\vee_{X, e})$.  If such a statement holds, then the map $\oh(B^\vee_{X, e})\rightarrow H_*(\Gr_\bbR)$ is surjective, since $H_*(\Gr_\bbR)$ is generated by the fundamental class of one cell $\Gr_{\nu, \bbR}$. 

To show that such a cell exists, we appeal to Theorems 1-3 of Bott's paper "The Space of Loops on a Lie Group" \cite{Bott_1958}.   Bott works with the case where the symmetric space is a centerless compact Lie group.  However, the results of Bott-Samelson's prior paper \cite{bottsamelson} give most of the setup for Theorem 1 in our case.  The remainder of the proofs of Theorems 1 and 2 follow by observation that, for centerless, splitting-rank real groups, the root datum of the relative dual group is that of $\SL_n(\bbC)$. Theorem 3 is an application of Theorems 1 and 2, together with homological commutativity of $H^*(\Omega(G/K),\bbQ)$.  As such, Bott's results immediately extend, and we have our generating cell $\Gr_{\nu, \bbR}$.

Now, consider the following sequence of sheaves on $\Gr_{\leq \nu, \bbR}$:

$$
\bbZ \rightarrow \calI^\nu_*[-\langle \rho^\vee, \nu \rangle] \rightarrow j^\nu_*\bbZ
$$

Taking cohomology, we get the maps

$$
H^*(\Gr_{\leq \nu, \bbR}) \rightarrow H^{*-\langle \rho^\vee, \nu \rangle} (\calI^\nu_*) \rightarrow H^*(\Gr_{\nu, \bbR}).
$$

Here the first map is given explicitly by cupping with $v_{low} = [\Gr_{\leq \nu, \bbR}]$.  The composition and the second maps are given by the natural restrictions.

Dualizing, we get maps in homology:

$$
H_*(\Gr_{\nu, \bbR}) \rightarrow H_{*-\langle \rho^\vee, \nu \rangle} (\calI^\nu_*) \rightarrow H_*(\Gr_{\leq \nu, \bbR}) .
$$

Here our map $H_{*-\langle \rho^\vee, \nu \rangle} (\calI^\nu_*) \rightarrow H_*(\Gr_{\leq \nu, \bbR})$ is the adjoint of $H^*(\Gr_{\leq \nu, \bbR}) \rightarrow H^{*-\langle \rho^\vee, \nu \rangle} (\calI^\nu_*)$.  Hence its composition with $H_*(\Gr_{\leq \nu, \bbR}) \rightarrow H_*(\Gr_\bbR)$ is given by

$$
u \longmapsto \sum_i \langle u, h^i\cdot[\Gr_{\leq \nu, \bbR}]\rangle\,h_i.
$$

Now, there is another natural map $H_{*-\langle \rho^\vee, \nu \rangle} (\calI^\nu_*) \rightarrow H_*(\Gr_{\bbR})$.  Recall that $H^{*-\langle \rho^\vee, \nu \rangle} (\calI^\nu_*)$ is the $G^{\vee}_{X}$-Schur module of lowest weight $[\Gr_{\leq \nu, \bbR}]$.  We map $H_{*-\langle \rho^\vee, \nu \rangle} (\calI^\nu_*) \rightarrow \oh(B^\vee_{X,e})$ by the matrix coefficient of $H^{*-\langle \rho^\vee, \nu \rangle} (\calI^\nu_*)$ with respect to $[\Gr_{\leq \nu, \bbR}]$. Explicitly, the map is given by

$$
u \mapsto \langle u, b\cdot [\Gr_{\leq \nu, \bbR}] \rangle
$$

for all $b\in B^\vee_{X,e}$. We now take the composition 

$$
H_{*-\langle \rho^\vee, \nu \rangle} (\calI^\nu_*) \rightarrow \oh(B^\vee_{X,e}) \xrightarrow \sigma^* H_*(\Gr_\bbR).
$$

This composition is given by the following formula:

$$
u \longmapsto \langle u, b\cdot \sigma([\Gr_{\leq \nu, \bbR}]) \rangle = \sum_i \langle u, h^i\cdot[\Gr_{\leq \nu, \bbR}]\rangle\,h_i
$$

This is the same as our earlier map.  In other words, the following square commutes:

$$
\begin{tikzcd}
H_{*-\langle \rho^\vee, \nu \rangle} (\calI^\nu_*)\arrow[d] \arrow[r] & \oh(B^\vee_{X,e})\arrow[d] \\
H_*(\Gr_{\leq \nu, \bbR}) \arrow[r] & H_*(\Gr_\bbR)
\end{tikzcd}
$$

Precomposing our first map $H_*(\Gr_{\nu, \bbR}) \rightarrow H_{*-\langle \rho^\vee, \nu \rangle} (\calI^\nu_*) $, we get that the image of $H_*(\Gr_{\nu, \bbR})$ in $H_*(\Gr_\bbR)$ lies inside the image of $\sigma^*$.  

Allowing $\nu$ to range over all cocharacters, we get that $\sigma^*$ is a surjection.  Hence $\sigma: \Spec H_*(\Gr_\bbR) \rightarrow B^\vee_{X, e}$ is a closed embedding.

To extend the result to splitting-rank $G_\bbR$ with nontrivial center, we apply the result to the centerless quotient $G^{adj}_\bbR$ of $G_\bbR$, whose affine Grassmannian $\Gr^{adj}_\bbR$  has homology $H_0(\Gr^{adj}_\bbR) \otimes H_*(\Gr^{adj,0}_\bbR)$ by Lemma 5.  We have that $\Spec(H_0(\Gr^{adj}_\bbR) \otimes H_*(\Gr^{adj,0}_\bbR)) \rightarrow Z^{\vee}_{X,adj} \times U^{\vee}_{X,adj}$ is a closed embedding.  We then notice that $\Gr^{adj,0}_\bbR \cong \Gr^{0}_\bbR$ and $Z^\vee_{X,adj}$ is finite.  Matching $U^\vee_{X,adj}$ with $U^\vee_{X}$, we get the result for $\Gr^{0}_\bbR$--i.e. that $\Spec H_*(\Gr^0_\bbR) \rightarrow U^\vee_X$ is a closed embedding.  Then we use the isomorphism $\Spec H_0(\Gr_\bbR) \cong Z^\vee_X$ to get that the whole map is a closed embedding.
\end{proof}

For injectivity, we need an intermediate lemma on flatness.  Let $\ell_{G}$ be the squared ratio of the lengths of the real coroots of $G_\bbR$.

\begin{lemma}
The scheme $B^\vee_{X,e}$ is flat over $\bbZ[1/\ell_G]$.

Likewise, $B^\vee_{X,e^T}$ (resp. $B^\vee_{X, e^M}$) is flat over $R_{T_\bbR}[1/n_G\ell_G]$ (respectively $R_{M_\bbR}[1/n_G\ell_G]$).
\end{lemma}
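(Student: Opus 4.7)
My plan is to reduce flatness, in both the absolute and equivariant settings, to an explicit description of the centralizer, exploiting that $G^\vee_X \cong \SL(n, \bbC)$ is of type $A$ in every case under consideration.

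In the non-equivariant case, since $e$ is (by its construction as the endomorphism dual to cupping with $\adet$ in the basis $\{1, \alpha, \ldots, \alpha^{n-1}\}$ of $H^*(\Gr_\bbR)$) the standard principal nilpotent of $\sl_n$, its centralizer in $G^\vee_X$ lies in every Borel containing it, so $B^\vee_{X,e} = C_{G^\vee_X}(e)$. A direct calculation identifies this with the closed subscheme of $\SL_n$ consisting of matrices $c_0 I + c_1 e + \cdots + c_{n-1} e^{n-1}$ with $c_0^n = 1$; equivalently, $B^\vee_{X,e} \cong \mu_n \times \bbA^{n-1}_\bbZ$. Both factors are flat over $\bbZ$, giving flatness over $\bbZ[1/\ell_G]$.

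In the equivariant case, I would appeal to the principle that, over any base $R$, an $R$-point $y \in \frakg^\vee_X \otimes R$ landing fiberwise in the regular locus has centralizer $C_{G^\vee_X \times \Spec R}(y)$ smooth (hence flat) of relative dimension $n-1$; for type $A$ this holds integrally, via Kostant's section and smoothness of the universal regular centralizer there. Flatness of $B^\vee_{X, e^C}$ over $R_C[1/n_G \ell_G]$ therefore reduces to showing that $e^C = e + \delta_C$, with $\delta_C \in \frakt^\vee_X \otimes R_C^{>0}$ as computed case-by-case in the previous section, is fiberwise regular at every geometric point of $\Spec R_C[1/n_G \ell_G]$. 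At the origin $\delta_C = 0$ and $e^C = e$ is manifestly regular; elsewhere, I would examine the Jordan decomposition of $e + \delta_C$ and verify that its nilpotent part is a regular nilpotent in the centralizer of its semisimple part. The primes $n_G, \ell_G$ are inverted precisely to clear the factors of $\tfrac{1}{2}$ appearing in $\delta_C$ and to exclude residue characteristics in which the semisimple part could collapse to a non-regular element.

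The main obstacle is carrying out the fiberwise regularity check uniformly. In each of the three families, the formula for $\delta_C$ exhibits it as a sum of simple tensors $e_i \otimes (\text{monomial in distinct coordinates})$, so the non-regular locus is contained in a union of coordinate subschemes of $\Spec R_C$, which becomes empty after inverting the specified primes. The $M$-equivariant statement then follows by taking $W_M$-invariants throughout: $R_M = R_C^{W_M}$, $e^M$ is the $W_M$-invariant analogue of $e^C$, and the same regularity argument applies verbatim.
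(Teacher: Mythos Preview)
Your non-equivariant argument is correct and pleasantly explicit: since $G^\vee_X$ is of type $A$, the centralizer of the standard regular nilpotent in $\SL_n$ really is $\{c_0 I + c_1 e + \cdots + c_{n-1}e^{n-1} : c_0^n = 1\} \cong \mu_n \times \bbA^{n-1}$, visibly free over $\bbZ$. The paper instead argues more abstractly: it exhibits $B^\vee_{X,e}$ as the zero fiber of $b \mapsto \Ad(b)e - e : B^\vee_X \to \mathfrak{u}^\vee_X$, checks that every geometric fiber over $\Spec\bbZ[1/\ell_G]$ has the expected dimension $r$, and concludes flatness from the local complete intersection criterion over a regular base. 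Your approach is more concrete; the paper's is isogeny-insensitive and would survive leaving type $A$.

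Your equivariant argument, however, has a genuine gap. The sentence ``the non-regular locus is contained in a union of coordinate subschemes of $\Spec R_C$, which becomes empty after inverting the specified primes'' is a misconception: inverting integers $n_G, \ell_G$ in $\bbZ$ localizes the base of $R_C$ but does nothing to closed subschemes like $\{z_i = 0\}$ in $\Spec R_C$. So your proposed mechanism for ruling out non-regular fibers does not work as stated. In fact the fiberwise regularity you want is true for a different and simpler reason: in $\sl_n$, any element of the form $e + D$ with $D$ diagonal and $e$ having nonzero entries on the full superdiagonal is automatically regular (the last standard basis vector is cyclic), so $e^C$ is regular at \emph{every} geometric point of $\Spec R_C$, no localization needed. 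If you supply that observation and the integral smoothness of the regular centralizer in type $A$, your route goes through.

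For comparison, the paper avoids the fiberwise regularity check entirely. It uses the grading $\bbG_m$-action on $\Spec R_C$: every point specializes to the origin, where $e^C$ degenerates to $e$, so by upper semicontinuity each fiber of $B^\vee_{X,e^C}$ has dimension at most that of the central fiber, which is $r$ by the non-equivariant case; combined with the lower bound from the complete-intersection setup, all fibers have dimension exactly $r$ and flatness follows. This contraction argument is what makes the equivariant reduction uniform and explains why no separate case analysis of $\delta_C$ is needed.
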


\begin{proof}
The proof is exactly as in [YZ]. First, we prove the non-equivariant case. Then we will reduce both equivariant cases to the non-equivariant case. 

Define the following map:

$$
\varphi: B^{\vee}_{X} \rightarrow \mathfrak{u}^\vee_X \times \Spec\bbZ[1/\ell_G]
$$

given by $\varphi(b) = \Ad(b)e-e$.  By construction, $B^\vee_{X,e}$ is precisely the fiber over zero of this map.  Each fiber of $B^\vee_{X,e} \rightarrow \Spec\bbZ[1/\ell_G]$, then, has dimension at least $r = \dim B^\vee_X - \dim u^\vee_X$.  If each fiber has dimension exactly $r$, then $B^\vee_{X,e}$ is locally a complete intersection over $\Spec\bbZ[1/\ell_G]$.  Since $\Spec\bbZ[1/\ell_G]$ is regular, then $B^\vee_{X,e}$ will be flat.

We check the dimension of the fiber over each prime $p$ of $\Spec\bbZ[1/\ell_G]$.  Base change $B^\vee_X$ to $k = \overline{\bbF}_p$.  In this setting, we may scale $e = \sum_i n_ix_i$ by an element of $T(k)$ to the standard principal nilpotent $e_0 = \sum_i x_i$.  Then $B^\vee_{X,e} = Z^\vee \times U^\vee_{X,e_0}$.  Since $\dim U^\vee_{X,e_0} = r-\dim Z^\vee$, we get that our fiber dimension is exactly $r$.  Hence $B^\vee_{X,e}$ is flat over $\bbZ[1/\ell_G]$.

For the equivariant cases, we reduce to the nonequivariant case in the following way.  We treat the case of $R_{T_\bbR}[1/\ell_Gn_G]$--an identical argument applies to $R_{M_\bbR}[1/\ell_Gn_G]$.  

We mimic the construction of $\varphi$ in this setting.  Consider the morphism

$$
\Ad(-)e^M-e^M: B^\vee_X \times \Spec R_{T_\bbR}[1/\ell_Gn_G] \rightarrow \mathfrak{u}^\vee_X \times \Spec R_{T_\bbR}[1/\ell_Gn_G].
$$

Like in the non-equivariant case, $B^\vee_{X, e^T}$ is the fiber above zero of this map.  As before, $B^\vee_{X, e^T}$ will be locally a complete intersection, and hence flat, if every fiber dimension is exactly $r$.  

Recall that $\Spec R_{T_\bbR}$ comes equipped with a natural $\bbG_m$-action given by the grading on $R_{T_\bbR}$, along with the inclusion $\Spec \bbZ \rightarrow \Spec R_{T_\bbR}$ given by letting every monomial tend to zero.  The $\bbG_m$-action is compatible with all base changes $\Spec (R_{T_\bbR}\otimes k)$ for $k$ the residue field of a prime of $\bbZ$.  Hence, for a prime $P \in \Spec R_{T_\bbR}$ lying above $p \in \Spec \bbZ$, we have that the $\bbG_m$-orbit of $P$ remains above $p$.  The closure of the orbit, then, contains $p$.  Thus we have $\dim B^\vee_{X, e^T, P} \leq \dim B^\vee_{X, e^T, p}$, where $p\in \bbZ$.  Since $B^\vee_{X, e^T, p}$ is simply the fiber of $B^\vee_{X, e}$ over $p$, its fiber dimensions are all $r$.  Thus, every fiber in the equivariant case also has dimension $r$, proving the assertion.
\end{proof}

Now we prove injectivity and hence prove isomorphism.

\newtheorem*{t8}{Theorem 8}
\begin{t8}
We have the following isomorphisms:
\begin{align}
    \Spec H_*(\Gr_{\bbR}, \bbZ[1/\ell_G]) &\cong B^\vee_{X,e}[1/\ell_G] \\
    \Spec H_*^{T_\bbR}(\Gr_{\bbR}, \bbZ[1/n_G\ell_G]) &\cong B^\vee_{X,e^T}[1/\ell_G]\\
    \Spec H_*^{M_\bbR}(\Gr_{\bbR}, \bbZ[1/n_G\ell_G]) &\cong B^\vee_{X,e^M} [1/\ell_G]
\end{align}
\end{t8}

\begin{proof}
We begin with the ${T_\bbR}$-equivariant case, where we can use the equivariant localization theorem.

Up to contracting our semi-infinite orbits $S_{\nu,\bbR}$, our fixed points are given by $t^\nu$ with $\nu \in X_*(A)$ for $A$ the maximal split torus.  Using the localization theorem, the natural map

$$
Loc^*: H_{T_\bbR}^*(\Gr_\bbR) \rightarrow \prod_{\nu\in \Lambda_S} H_{T_\bbR}^*(\{t^\nu\}). 
$$

becomes an isomorphism when tensored with $Q = \textnormal{Frac } R_{T_\bbR}$.  

We take the adjoint:

$$
Loc_*: \bigoplus_{\nu\in \Lambda_S} H_{T_\bbR}^*(\{t^\nu\}) \rightarrow H^{T_\bbR}_*(\Gr_\bbR) 
$$

Identify the sum on the left with $R_{T_\bbR}[\Lambda_S]$ and take the spectrum.  We get the following map:

$$
\Spec(Loc_*): \Spec H^{T_\bbR}_*(\Gr_\bbR)  \rightarrow T^\vee \times \Spec R_{T_\bbR} 
$$

This becomes an isomorphism after base-changed with $\Spec Q$. 

We claim that this factors as follows:

$$
\begin{tikzcd}
\Spec (H^{T_\bbR}_*(\Gr_\bbR)) \arrow[r, "\sigma_{T_\bbR}"] \arrow[rd] & \arrow[d] B^\vee_{X, e^T}[1/\ell_G]\\
& T^\vee \times \Spec R_{T_\bbR}
\end{tikzcd}
$$

Consider the composition of $\sigma_{T_\bbR}$ with projection to $T^\vee \times \Spec R_{T_\bbR}$.  Observe that the action of $H_{T_\bbR}^*(\Gr_\bbR)$ on each character functor $H_{{T_\bbR},c}(S_{\nu,\bbR}, \calF)$ factors through restriction to $H_{T_\bbR}(S_{\nu,\bbR}) = H_{T_\bbR}(\{t^\nu\})$.  By adjunction, for $i_\nu = \{t^\nu\} \rightarrow \Gr_\bbR$ and $v_\nu \in H_{{T_\bbR},c}(S_{\nu,\bbR}, \calF)$, we have that

$$
\sigma_{T_\bbR}(v_\nu) = \sum_{\nu} (i_nu^*(h^i)\cup v_\nu) \otimes h_i = v_\nu\otimes i_{\nu,*}(1).
$$

Summing over all $i_{\nu,*}$, we get the map $Loc_*$.  Hence the diagram commutes.

Now, after localization, we have that $\Spec (Loc)_*$ is an isomorphism. By principality of $e$, the projection onto $T^\vee \times \Spec R_{T_\bbR}$ is also an isomorphism after localization.  Thus $\sigma_{T_\bbR}$ is an isomorphism after localization.

Using flatness of both $B^\vee_{X,e^T}[1/\ell_G]$ and $\Spec (H^{T_\bbR}_*(\Gr_\bbR, \bbZ[1/n_G\ell_G))$, we get that $\sigma_{T_\bbR}$ is an isomorphism.  This proves the ${T_\bbR}$-equivariant case.

For the ${M_\bbR}$-equivariant case, take the quotient by $W_M$ of both sides.

For the de-equivariantized case, base changing the ${T_\bbR}$-equivariant case to $\Spec \bbQ$ gives an isomorphism $\Spec H_*(\Gr_{\bbR}, \bbQ) \cong B^\vee_{X,e}\times \Spec \bbQ$.  Using flatness, we get the result over $\bbZ[1/\ell_G]$.
\end{proof}

\section*{$G_\bbR$-equivariant homology}

We use the description of ${T_\bbR}$ and ${M_\bbR}$-equivariant homology to calculate the $G_\bbR$-equivariant homology of $\Gr_\bbR$.  Recall that $G_\bbR(\bbR[[t]])$ retracts onto $G_\bbR$, which in turn retracts onto its maximal compact subgroup $K$.

After inverting 2, we have that $R_K = (R_{T_\bbR})^{W_K}$, where $W_K$ is the Weyl group of $K$.  Thus we may identify

$$
H_*^{G_\bbR}(\Gr_\bbR) \cong (H_*^{T_\bbR}(\Gr_\bbR))^{W_K} \cong \oh(B^\vee_{X, e^T})^{W_K}.
$$

To calculate the quotient $B^\vee_{X, e^{T_\bbR}}/W_K$, we filter $W_K$ in two steps.  We know that $B^\vee_{X, e^M} = B^\vee_{X, e^T}/W_K$.  Since $W_K$ is normal with $W/W_K \cong W(G^\vee_X)$, we note that $B^\vee_{X, e^T}/W_K \cong B^\vee_{X, e^M}/W(G^\vee_X)$.  By \cite{yun_integral_2011}, after inverting the set $S_2$ of all bad primes of $G^\vee_{X}$ and those dividing $n$, we may identify $B^\vee_{X, e^M}/W(G^\vee_X)$ with the regular centralizer group scheme $J^\vee_X$ of $G^\vee_X$.

Hence, we have the following theorem.

\newtheorem*{t9}{Theorem 9}
\begin{t9}
For $G_\bbR$ of splitting rank, there is an isomorphism of group schemes:

$$
\Spec H_*^{G_\bbR}(\Gr_\bbR, \bbZ_{S_2}) \cong J^\vee_X \times \Spec \bbZ_{S_2}
$$
\end{t9}

We conclude with a quick observation about the other cases.  We have only used $G_\bbR$ being centerless to guarantee a minuscule cocharacter $\nu$--hence a smooth variety $\Gr_{\leq \nu,\bbR}$ generating $H_*(\Gr_\bbR,\bbQ)$.  Other forms may have such a coweight:  in particular, by a paper of Crabb and Mitchell, the form $G_\bbR = \GL_n(\bbH)$ corresponding to $G/K = \textnormal{U}(2n)/\Sp(n)$ has such a coweight \cite{crabb_mitchell_1988}.  For these our results hold by an identical proof.  Hence, we have the following corollary:

\newtheorem*{c1}{Corollary 1}
\begin{c1}
For $G_\bbR$ with a minuscule generating cocharacter, we have the following isomorphisms:

\begin{align*}
    \Spec H_*(\Gr_{\bbR}, \bbZ[1/\ell_G]) &\cong B^\vee_{X,e}[1/\ell_G] \\
    \Spec H_*^{M_\bbR}(\Gr_{\bbR}, \bbZ[1/n_G\ell_G]) &\cong B^\vee_{X,e^M}[1/\ell_G]\\
    \Spec H_*^{G_\bbR}(\Gr_\bbR, \bbZ_{S_2}) &\cong J^\vee_X \times \Spec \bbZ_{S_2}
\end{align*}
\end{c1}

\section*{Acknowledgements}
I would like to acknowledge my advisor Tsao-Hsien Chen for his help with this work, as well as David Nadler and Mark Macerato for insightful conversations.

This research is supported by NSF grant DMS 2143722.

\printbibliography
\end{document}